\newcommand{\R}{{\mathbb R}}       
\newcommand{\DD}{{\mathcal D}}
\newcommand{\AZ}{{\mathcal A}}
\newcommand{\BZ}{{\mathcal B}}
\newcommand{\RR}{{\mathcal R}}
\newcommand{\diam}{{\rm diam}}
\newcommand{\dist}{{\rm dist}}
\newcommand{\rf}[1]{{(\ref{#1})}}
\newcommand{\supp}{\operatorname{supp}}
\newcommand{\ve}{{\varepsilon}}
\newcommand{\vv}{{\vspace{2mm}}}
\newcommand{\wt}[1]{{\widetilde{#1}}}
\newcommand{\pv}{\operatorname{pv}}
\newcommand{\BAUP}{{\mathsf{BAUP}}}
\newcommand{\BAUPP}{{\mathsf{BAUPP}}}
\def\XXint#1#2#3{{\setbox0=\hbox{$#1{#2#3}{\int}$ }
\vcenter{\hbox{$#2#3$ }}\kern-.58\wd0}}
\def\H11{\textup{H}^1_1} 					
\definecolor{ffffff}{rgb}{1.0,1.0,1.0}
\definecolor{qqqqff}{rgb}{0.0,0.0,1.0}
\definecolor{ffqqqq}{rgb}{1.0,0.0,0.0}
\definecolor{zzzzqq}{rgb}{0.6,0.6,0.0}
\definecolor{marronet}{rgb}{0.6,0.2,0}
\definecolor{negre}{rgb}{0,0,0}
\definecolor{vermell}{rgb}{0.8,0.05,0.05}
\definecolor{blau}{rgb}{0.3,0.2,1.}
\definecolor{blauclar}{rgb}{0.,0.,1.}
\definecolor{grisfosc}{rgb}{0.25098039215686274,0.25098039215686274,0.25098039215686274}
\definecolor{verd}{rgb}{0.1,0.6,0.1}
\definecolor{taronja}{rgb}{0.9,0.6,0.05}
\definecolor{vermellclar}{rgb}{1.,0.,0.}
\definecolor{verdet}{rgb}{0,0.8,0.1}
\definecolor{blauverd}{rgb}{0,0.4,0.2}
\definecolor{grisclar}{rgb}{0.6274509803921569,0.6274509803921569,0.6274509803921569}
\newtheorem{theorem}{Theorem}[section]
\newtheorem{lemma}[theorem]{Lemma}
\newtheorem*{claim*}{Claim}
\newtheorem*{theorem*}{Theorem}
\theoremstyle{definition}
\theoremstyle{remark}
\newtheorem{rem}[theorem]{\bf Remark}
\numberwithin{equation}{section}
\newcommand{\brem}{\begin{rem}}
\newcommand{\erem}{\end{rem}}
\begin{document}

\title{Riesz transforms and the BAUPP and BWGL criteria for uniform rectifiability}

\author{Xavier Tolsa}

\address{ICREA, Barcelona\\
Dept. de Matem\`atiques, Universitat Aut\`onoma de Barcelona \\
and Centre de Recerca Matem\`atica, Barcelona, Catalonia.}
\email{xavier.tolsa@uab.cat}

\thanks{Supported by the European Research Council (ERC) under the European Union's Horizon 2020 research and innovation programme (grant agreement 101018680). Also partially supported by MICIU (Spain) under the grant PID2024-160507NB-I00. 
}



\begin{abstract}
In this note it is shown that if $\mu$ is an $n$-Ahlfors regular measure in $\R^{n+1}$ such that
the $n$-dimensional Riesz transform is bounded in $L^2(\mu)$ and the so-called BAUPP (bilateral approximation by unions of parallel planes)
condition holds for $\mu$, then $\mu$ satisfies the BWGL (bilateral weak geometric lemma), and so $\mu$ is uniformly $n$-rectifiable.
In this way, one can solve the David-Semmes problem in codimension one without relying on the BAUP (bilateral approximation by unions of planes)
criterion of David and Semmes.
\end{abstract}

\newcommand{\mih}[1]{\marginpar{\color{red} \scriptsize \textbf{Mih:} #1}}
\newcommand{\xavi}[1]{\marginpar{\color{blue} \scriptsize \textbf{Xavi:} #1}}

\maketitle

\section{Introduction}

A Radon measure $\mu$ in $\R^{d}$ is called  $n$-Ahlfors regular if
\begin{equation}\label{eqAD1}
C^{-1}r^n\leq \mu(B(x,r))\leq C r^n \quad \mbox{ for all $x\in\supp\mu$ and $0<r\leq \diam(\supp\mu)$.}
\end{equation}
On the other hand, $\mu$ is called uniformly $n$-rectifiable if it is $n$-Ahlfors regular and
there exist constants $\theta, M >0$ such that for all $x \in \supp\mu$ and all $0<r\leq \diam(\supp\mu)$ 
there is a Lipschitz mapping $g$ from the ball $B_n(0,r)$ in $\R^{n}$ to $\R^d$ with $\text{Lip}(g) \leq M$ such that
$$
\mu(B(x,r)\cap g(B_{n}(0,r)))\geq \theta r^{n}.$$
The notion of uniform rectifiability is a quantitative version of $n$-rectifiability introduced by David and Semmes (see \cite{DS1} and
\cite{DS}) which has attracted much attention because its applications to harmonic analysis and PDE's in rough settings, among other things.

An important question in this area is the so-called David-Semmes problem, which consists in proving that if $\mu$ is $n$-Ahlfors regular and the
Riesz transform operator $\RR_\mu$ is bounded in $L^2(\mu)$, then $\mu$ is uniformly $n$-rectifiable. 
The ($n$-dimensional) Riesz transform of a signed Radon measure $\nu$ is defined by
$$\RR\nu (x) = \int \frac{x-y}{|x-y|^{n+1}}\,d\nu(y),$$
whenever the integral makes sense. 
We also write
$$\RR_*\nu(x) = \sup_{\ve>0} |\RR_{\ve}\nu(x)|, \qquad  \pv\RR\nu(x) = \lim_{\ve>0} \RR_{\ve}\nu(x),$$
in case that the latter limit exists. Remark that, sometimes, abusing notation we will
write $\RR\nu$ instead of $\pv\RR\nu$.

For $f\in L^1_{loc}(\mu)$ and a positive Radon measure $\mu$,
one writes $\RR_\mu f(x) = \RR(f\mu)(x)$.
Given $\ve>0$, the $\ve$-truncated Riesz transform of $\mu$ equals
$$\RR_\ve\mu (x) = \int_{|x-y|>\ve} \frac{x-y}{|x-y|^{n+1}}\,d\mu(y),$$
and the operator $\RR_{\mu,\ve}$ is defined by
 $\RR_{\mu,\ve}f(x) = \RR_\ve(f\mu)(x)$. 
 We say that $\RR_\mu$ is bounded in $L^2(\mu)$ if
the operators $\RR_{\mu,\ve}$ are bounded uniformly in $L^2(\mu)$ uniformly on $\ve$, and then we denote
$$\|\RR_\mu\|_{L^2(\mu)\to L^2(\mu)} = \sup_{\ve>0} \|\RR_{\mu,\ve}\|_{L^2(\mu)\to L^2(\mu)}.$$

The David-Semmes problem was solved in the affirmative in the case $n=1$ in \cite{MMV} by Mattila, Melnikov, and Verdera, and in the case
$n=d-1$ in \cite{NToV} by Nazarov, the author, and Volberg \cite{NToV}. For other values of $n$, this is still an open problem. One of the essential ingredients of the proof in \cite{NToV} is the 
so called BAUP criterion. To state this, we need need some additional notation.

For a given $\ve>0$ and $x\in\supp\mu$, $r>0$, we write $(x,r) \in \BAUP(\ve)$ (BAUP stands for bilateral approximation by a union of planes) if there exists a family $\tilde I_{x,r}$ of
$n$-planes such that, setting
$$\tilde H_{x,r} = \bigcup_{L\in \tilde I_{x,r}} L,$$
it holds
$$\sup_{x\in \supp\mu \cap B(x,r)} \frac{\dist(x,\tilde H_{x,r})}{\ell(Q)} + \sup_{x\in \tilde H_{x,r} \cap B(x,r)} \frac{\dist(x,\supp\mu)}{r}\leq \ve.$$
Then we say that $\mu$ satisfies the BAUP condition, if for any $\ve>0$, the following Carleson type condition holds:
$$\int_B \int_0^{r(B)} \chi_{(x,t)\not \in \BAUP(\ve)}\,\frac{dt}t\,d\mu(x) \leq \mu(B),$$
for any ball $B$ centered at $\supp\mu$, where $r(B)$ denotes the radius of $B$.
A deep result of David and Semmes (see \cite[Chapter II.3]{DS}) asserts that if $\mu$ is $n$-Ahlfors regular and it satisfies the BAUP condition, then $\mu$ is
uniformly $n$-rectifiable. The proof of this result is difficult and lengthy. The purpose of this note is to give a more straightforward argument
that avoids the application of the BAUP criterion for the solution of the David-Semmes problem in codimension $1$ and instead relies on the BGWL (bilateral weak geometric lemma). The definition of the BGWL condition is analogous to the one of the BAUP condition, with the family $\tilde I_{x,r}$ formed in this case by a single $n$-plane.
Remark that the proof of the fact that the BGWL suffices for the uniform $n$-rectifiability of $\mu$ is considerably easier than the one
for the BAUP condition, specially in codimension one (see \cite[Chapter II.2]{DS}).

In \cite{NToV} it is shown that if $\mu$ is an $n$-Ahlfors regular measure in $\R^{n+1}$ such that $\RR_\mu$ is bounded in $L^2(\mu)$, then the BAUP condition holds for $\mu$ and so $\mu$ is uniformly $n$-rectifiable, by the aforementioned criterion of David and Semmes. In fact, a quick inspection of the arguments in \cite{NToV} shows that $\mu$ satisfies something (a priori) stronger than the BAUP condition, namely what we call the BAUPP (bilateral approximation by union of {\em parallel} planes. This is defined as the BAUP, but only allowing parallel $n$-planes in family
$\tilde I_{x,r}$. Since this notion is important for our purposes we state it more precisely. 

For a given $\ve>0$ and $x\in\supp\mu$, $r>0$, we write $(x,r) \in \BAUPP(\ve)$  if the exists a family $I_{x,r}$ of parallel
$n$-planes such that, setting
$$H_{x,r} = \bigcup_{L\in I_{x,r}} L,$$
it holds
$$\sup_{x\in \supp\mu \cap B(x,r)} \frac{\dist(x,H_{x,r})}{\ell(Q)} + \sup_{x\in H_{x,r} \cap B(x,r)} \frac{\dist(x,\supp\mu)}{r}\leq \ve.$$
Then we say that $\mu$ satisfies the BAUPP condition, if for any $\ve>0$, the following holds:
$$\int_B \int_0^{r(B)} \chi_{(x,t)\not \in \BAUPÇP(\ve)}\,\frac{dt}t\,d\mu(x) \leq \mu(B),$$
for any ball $B$ centered at $\supp\mu$.

Remark that, trivially, we have
\begin{equation}\label{eqtriv}
BWGL \;\Rightarrow\; BAUPP\;\Rightarrow\; BAUP.
\end{equation}
The implication $BAUP\;\Rightarrow\; BWGL$ also holds, but it is much more complicated. Hence, the three conditions in \rf{eqtriv} are equivalent to the uniform rectifiability of $\mu$.

In this note we will prove the following.

\begin{theorem}\label{teo1}
Let $\mu$ be an $n$-Ahlfors regular measure in $\R^{n+1}$ satisfying the BAUPP condition such that the $n$-dimensional Riesz transform $\RR_\mu$ is bounded in $L^2(\mu)$.
Then $\mu$ satisfies the BWGL.
\end{theorem}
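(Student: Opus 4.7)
The plan is to establish the Carleson packing condition for scales at which BWGL fails, by combining BAUPP with the $L^2(\mu)$-boundedness of $\RR_\mu$ via a reduction to a $1$-dimensional signed-mass operator in the direction orthogonal to the approximating parallel planes.

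First, perform a corona decomposition of a system of David-Mattila cubes $\DD(\mu)$: at each cube $Q$ where BAUPP holds, the approximating family of parallel $n$-planes determines a common orthogonal unit vector $e_Q\in S^n$, and cubes are grouped into trees $\tree(R)$ rooted at $R$, with stopping triggered whenever $e_Q$ drifts from $e_R$. The packing of the stopping cubes is absorbed into the BAUPP packing. Within $\tree(R)$, with vertical direction $e_R$, introduce the $1$-dimensional projection $\nu_R := (\pi_{e_R})_\#(\mu|_R)$. BAUPP combined with Ahlfors regularity and the reverse inclusion of BAUPP forces $\nu_R$ to consist, at each dyadic scale $\ell(Q)$ for $Q\in\tree(R)$, of a discrete family of atoms representing the BAUPP planes; BWGL-failure at $Q$ amounts to $\nu_R$ having at least two such atoms inside $\pi_{e_R}(B_Q)$ separated by $\gtrsim\ve\ell(Q)$ and each of mass $\gtrsim\ell(Q)^n$.

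The analytic core is that, under this parallel-plane structure, the $e_R$-component of $\RR_\mu$ applied to $f$ of the form $f(y)=\phi(\pi_{e_R}(y))$ reduces, modulo BAUPP-error terms, to the $1$-dimensional signed-mass operator
\[
\tilde T\phi(h) := C_n \sum_{h_k\in\supp\nu_R}\phi(h_k)\,\mathrm{sgn}(h-h_k),
\]
via the explicit calculation $\int_{\R^n}\frac{h\,dy}{(|y|^2+h^2)^{(n+1)/2}} = C_n\,\mathrm{sgn}(h)$. Hence $L^2(\mu)$-boundedness of $\RR_\mu$ transfers to $L^2(\nu_R)$-boundedness of $\tilde T$ with a uniform bound. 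A direct computation shows that $\|\tilde T\|$ grows like $\sqrt{k-1}$ on a $k$-atom configuration of $\nu_R$, and in particular is unbounded on a self-similar multi-scale $2$-atom configuration---the configuration which would allow BWGL to fail at every dyadic scale. Quantitatively, $L^2(\nu_R)$-boundedness of $\tilde T$ yields a Carleson packing of the scales at which $\nu_R$ has two well-separated atoms, by testing $\tilde T$ against Haar-type differences $\tilde f_Q := \chi_{\text{upper cluster}}-\chi_{\text{lower cluster}}$ at BWGL-bad $Q\in\tree(R)$ and applying a quasi-orthogonality/Pythagoras estimate across scales. Summing over $\tree(R)$ and over corona trees completes the proof of the BWGL packing.

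The main obstacle is making the cross-scale quasi-orthogonality rigorous. Since $\tilde T$-boundedness is consistent with bounded $k$-atom configurations at any single scale, the Carleson packing must be extracted from multi-scale coherence, in the spirit of $\beta$-number square function estimates for odd Calder\'on-Zygmund kernels. Controlling the error terms from the approximate-rather-than-exact parallel-plane structure inside a tree---which requires choosing the orientation-drift threshold and the BAUPP parameter $\ve'$ suitably small relative to the BWGL threshold $\ve$---is where the bulk of the technical work concentrates.
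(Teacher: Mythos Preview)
Your core intuition---that the component of $\RR_\mu$ orthogonal to the BAUPP planes detects the separation between the outermost layers, via the identity $\int_{\R^n}\frac{h\,dy}{(|y|^2+h^2)^{(n+1)/2}}=C_n\,\mathrm{sgn}(h)$---is exactly what the paper exploits, and your Haar-type test functions $\tilde f_Q=\chi_{\text{upper}}-\chi_{\text{lower}}$ are essentially the paper's $\eta_Q$. But the proposal has genuine gaps. First, the operator $\tilde T$ is not well-defined: $\nu_R=(\pi_{e_R})_\#(\mu|_R)$ is not atomic, so the sum over $h_k\in\supp\nu_R$ is meaningless; and even reading it as the integral operator with kernel $\mathrm{sgn}(h-s)$, the transfer of $L^2(\mu)$-boundedness only tests $\tilde T$ on the subspace of $\phi$ arising from horizontally constant $f$, modulo error terms you never estimate. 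Second, the $\sqrt{k-1}$ growth is a single-scale observation that gives nothing here, since Ahlfors regularity already caps the number of planes at $C_1$; you acknowledge this and defer the actual content to a ``cross-scale quasi-orthogonality'' argument that you then leave open. That step \emph{is} the theorem---without it you have a heuristic, not a proof.

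The paper's route is considerably more direct and avoids both the corona decomposition and the 1D reduction. Rather than tracking orientation drift, it uses pigeonhole: since $\#I_Q\le C_1$, among any $m>C_1(N+1)$ consecutive BAUPP ancestors of $Q$ there are $N$ in a row sharing the \emph{same} family $I_{Q^{(j)}}=\ldots=I_{Q^{(j+N)}}$. On that stable window it builds two thin cylinders $P_t,P_b$ near the top and bottom planes and proves directly (Lemma~\ref{lem1}) that $|m_{P_t}(\RR_\mu\chi_{2R})-m_{P_b}(\RR_\mu\chi_{2R})|\ge\tau$, with the positivity coming from the sign of $x_{n+1}-y_{n+1}$ and the error terms (the far-field contribution and the same-slab contribution across $\partial_l C_Q$) controlled by choosing $N$ large and $\ve$ small, using a thin-boundary condition on the cylinder. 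The Carleson packing---your ``main obstacle''---then follows from a standard Littlewood--Paley estimate for mean-zero bump functions $\psi_Q$ supported on $C_2B_Q$ (Lemma~\ref{lemaux}); once the uniform lower bound $\tau$ is in hand, this step is routine. Your corona-by-orientation machinery is not needed because the pigeonhole argument already freezes the geometry over enough scales to run the comparison.
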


By the discussion above, it is clear that, combining this result with the arguments in \cite{NToV}, one deduces that the $L^2(\mu)$ boundedness of the $n$-dimensional Riesz transform $\RR_\mu$ implies the BWGL for $\mu$, and so its uniform $n$-rectifiability. In this way, we can skip the
BAUP criterion for the solution of the David-Semmes problem.

\vv

\section{Proof of Theorem \ref{teo1}}

For an $n$-Ahlfors regular measure $\mu$ such as the one in the theorem, we let $\DD_\mu$ be an associated David-Semmes dyadic lattice, such as the one introduced in \cite[Chapter I.3]{DS}.
For $Q\in\DD_{\mu,k}$, we denote $\ell(Q)=2^{-k}$, and we assume that $\diam(Q)\leq \ell(Q)$. We let $B_Q$ be a ball concentric with $Q$ with radius $2\ell(Q) = 2^{-k+1}$.
For a given $\ve>0$, we write $Q\in \BAUPP(\ve)$ if the exists a family $I_Q$ of parallel
$n$-planes intersecting $B_Q$ such that, setting
$$H_Q = \bigcup_{L\in I_Q} L,$$
it holds
$$\sup_{x\in \supp\mu \cap B_Q} \frac{\dist(x,H_Q)}{\ell(Q)} + \sup_{x\in H_Q \cap B_Q} \frac{\dist(x,\supp\mu)}{\ell(Q)}\leq \ve.$$
It is easy to check that $\mu$ satisfies the BAUPP condition if and only if, for any $\ve>0$, the family $\DD_\mu\setminus \BAUPP(\ve)$ is a Carleson family.
That is, for any $R\in\DD_\mu$,
\begin{equation}\label{eqfinal***}
\sum_{Q\in \DD_\mu(R)\setminus \BAUPP(\ve)}\mu(Q) \leq C(\ve)\,\mu(R).
\end{equation}

Our goal is to show that the 
 assumptions of Theorem \ref{teo1} imply that \rf{eqfinal***} holds for any $R\in\DD_\mu$. To this end, for a given $\delta>0$, denote 
$$\BZ(\delta)= \{Q\in\DD_\mu:b\beta_\infty(Q)>\delta\},$$
where
$$b\beta_\infty(Q) = \inf_L \bigg(\sup_{x\in \supp\mu \cap B_Q} \frac{\dist(x,L)}{\ell(Q)} + \sup_{x\in L \cap B_Q} \frac{\dist(x,\supp\mu)}{\ell(Q)}\bigg),$$
with the infimum taken over all families of $n$-planes $L$.
We need to show that $\BZ(\delta)$ is a Carleson family for any $\delta>0$, or equivalently, that $\mu$ satisfies the BWGL.

For any fixed $m\geq 0$, let $Q^{(m)}\in\DD_\mu$ be the $m$-th ancestor of $Q$. It is easy to check that the BAUPP condition for $\mu$ implies that for any $m>0$ and any $\ve>0$, the family $\AZ^m(\ve)$ of cubes
$Q\in\DD_\mu$ such that, for some  $0\leq j\leq m$,  $Q^{(j)}\not\in \BAUPP(\ve)$, is a Carleson family.
Therefore, it suffices to prove that the family
$$\BZ_m^*(\delta)= \BZ(\delta)\cap \{Q\in\DD_\mu:Q^{(j)}\in \BAUPP(\ve)\text{ for $1\leq j\leq m$}\}$$
is Carleson, taking $m$ large enough and $\ve$ small enough (in particular $\ve\leq \delta/100$, say), depending on $\delta$.
That is, it is enough to prove that, for any $R\in\DD_\mu$,
\begin{equation}\label{eqfinal}
\sum_{Q\in \DD_\mu(R)\cap \BZ_m^*(\delta)}\mu(Q) \leq C(\delta)\,\mu(R).
\end{equation}

As shown in \cite[Chapter II.2]{DS}, there exists some constant $C_1$ such that for any $Q\in\BAUPP(\ve)$ (with $\ve$ small enough), the family of $n$-planes $I_Q$ can be chosen so that it contains no more than $C_1$ $n$-planes (otherwise the $n$-Ahlfors regularity of $\mu$ would be violated).
We will choose $m$ below depending only on $C_1$ and the $n$-Ahlfors regularity of $\mu$.

Fix $R\in\DD_\mu$ and let $Q\in \BZ_m^*(\delta)$ with $Q\subset R$. By taking $\ve$ small enough, depending on $m$, by geometric arguments (modifying if necessary the $\ve$ parameter and the choice of the families $I_{Q^{(j)}}$) we can assume that
$$I_{Q} \subset I_{Q^{(1)}}\subset \ldots \subset I_{Q^{(m)}},$$
or equivalently
$$H_{Q} \subset H_{Q^{(1)}}\subset \ldots \subset H_{Q^{(m)}}.$$
Consequently, for any $N>2$ and $m>C_1 (N+1)$, it follows that there exists some $j\in [0,m-N]$ such that the sequence of cubes $Q^{(j)},\ldots,Q^{(j+N)} $ satisfies 
$$I_{Q^{(j)}} = I_{Q^{(j+1)}} = \ldots = I_{Q^{(j+N)}}.$$

Without loss of generality, suppose that the $n$-planes from $I_{Q^{(j)}}$ are horizontal, and let $L_t,L_b$ be the $n$-planes from $I_{Q^{(j)}}$ which
are in the top and in the bottom positions, respectively, so that all the other $n$-planes from $I_{Q^{(j)}}$ are located between $L_t$ and $L_b$.
We can assume that 
$$\dist(L_t,L_b)\geq \frac\delta4\,\ell(Q),$$
because otherwise $\mu$ is well $\delta$-approximated in $B_Q$ by a single $n$-plane such as $L_t$ (using also that $\ve\ll\delta$),  that is, $b\beta_\infty(Q)\leq \delta$, which
contradicts the fact that $Q\in\BZ_m^*(\delta)\subset\BZ(\delta)$.
Also, by translating  slightly $L_t$ and $L_b$ if necessary (at a distance at most $\ve\ell(Q^{(j)})$ from the respective original planes $L_t$ and $L_b$), we can assume that $\supp\mu\cap B_{Q^{(j+N)}}$ lies between $L_t$ and $L_b$.

Let $x_t\in L_t\cap B_{Q^{j}}$ and $x_b\in L_b\cap B_{Q^{j}}$ be two points having the same vertical projection, and denote by $S_Q$ the region 
comprised between $L_t$ and $L_b$, That is,
$$S_Q = \{y\in\R^{n+1}:x_{b,n+1}\leq  y_{n+1}\leq  x_{t,n+1}\},$$
where the subindex $n+1$ denotes the $(n+1)$-th coordinate, so that $\supp\mu\cap B_{Q^{(j+N)}}\subset S_Q$.

Consider a closed cylinder $C_Q$ with axis equal to the vertical line $\ell_{x_t,x_b}$ through $x_t$ and $x_b$, whose bases are contained in $L_t$ and $L_b$, with radius
$r\in[\ell(Q^{(j)})/2,\ell(Q^{(j)})]$. Remark that $C_Q\subset 2B_{Q^{(j)}}$.
Denote by $\partial_lC_Q$ the lateral boundary of $C_Q$, i.e., $\partial_lC_Q=\{y\in\R^{n+1}:
\dist(y,\ell_{x_t,x_b})=r\}$. We choose $r$ so that the following thin boundary condition holds:
\begin{equation}\label{eqthin}
\mu(\{x\in 3B_{Q^{(j)}}:\dist(x,\partial_lC_Q)\leq t\,r\}) \leq C\,t\,r^n \approx t\,\mu(C_Q)\quad \text{ for $0<t\leq1$.}
\end{equation}
The existence of such radius $r$ follows by standard methods.

Consider an $n$-plane $L_t'$ parallel to $L_t$ contained in $S_Q$ such that
$$3\ve\ell(Q^{(j+N)})\leq \dist(L_t,L_t') \leq 6\ve\ell(Q^{(j+N)})$$
whch is at a distance at least $c\ve \ell(Q^{(j+N)})$ from the other $n$-planes from $I_{Q^{(j)}}$.
Denote by $P_t$ the closed cylinder with axis $\ell_{x_t,x_b}$, with radius $r$, and bases contained in in $L_t$ and $L_t'$, so that $P_t\subset C_Q$ and the top bases of $C_Q$ and $P_t$ coincide.
Analogously,  let $L_b'$ be an $n$-plane parallel to $L_b$ contained in $S_Q$ such that
$$3\ve\ell(Q^{(j+N)})\leq \dist(L_b,L_b') \leq 6\ve\ell(Q^{(j+N)})$$
which is at a distance at least $c\ve \ell(Q^{(j+N)})$ from the other $n$-planes from $I_{Q^{(j)}}$, and
let $P_b$ be the closed cylinder with axis $\ell_{x_t,x_b}$, with radius $r$, and bases contained in $L_b$ and $L_b'$, so that $P_b\subset C_Q$ and the bottom bases of $C_Q$ and $P_b$ coincide. Remark that the $\BAUPP(\ve)$ condition of $Q^{(j)}$ ensures that 
$$\mu(P_t)\approx \mu(P_b) \approx \ell(Q^{(j)})^n\approx r^n.$$

\vv

\begin{lemma}\label{lem1}
Under the above assumptions, for $Q\in\DD_\mu \cap \BZ_m^*(\delta)$, assuming $\ve>0$ small enough and $N$ large enough, we have
$$|m_{P_t}(\RR_\mu(\chi_{2 R})) - m_{P_b}(\RR_\mu(\chi_{2 R}))|\geq \tau,$$
for some $\tau>0$ depending on the $n$-Ahlfors regularity of $\mu$.
\end{lemma}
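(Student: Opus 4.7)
The plan is to compare $m_{P_t}(\RR_\mu(\chi_{2R}))-m_{P_b}(\RR_\mu(\chi_{2R}))$ with the analogous quantity for a flat model measure supported on the planes of $I_{Q^{(j)}}$, and then to lower-bound the flat-model difference explicitly. Write $\RR(\chi_{2R}\mu)=\RR(\chi_{\widetilde B}\mu)+\RR(\chi_{2R\setminus \widetilde B}\mu)$, where $\widetilde B$ is a ball of radius $\asymp \ell(Q^{(j+N)})$ containing $C_Q$, and introduce $\nu:=\sum_{L\in I_{Q^{(j)}}}\rho_L\,\mathcal H^n|_{L\cap\widetilde B}$, with constants $\rho_L\asymp 1$ (depending only on the Ahlfors regularity of $\mu$) chosen so that $\nu$ matches $\chi_{\widetilde B}\mu$ in a thin slab around each plane $L\in I_{Q^{(j)}}$; such densities exist by $\BAUPP(\ve)$ at $Q^{(j+N)}$.

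\vv

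The far contribution is controlled by the standard kernel variation estimate: for $x,x'\in C_Q$ and $y\in 2R\setminus\widetilde B$ one has $|K(x,y)-K(x',y)|\lesssim |x-x'|/|x-y|^{n+1}$ with $|x-x'|\lesssim\ell(Q^{(j)})$ and $|x-y|\gtrsim \ell(Q^{(j+N)})$, so that $n$-Ahlfors regularity of $\mu$ yields
$$|m_{P_t}(\RR(\chi_{2R\setminus\widetilde B}\mu))-m_{P_b}(\RR(\chi_{2R\setminus\widetilde B}\mu))|\lesssim 2^{-N}.$$

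\vv

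For the flat model $\nu$, $\pv\RR(\mathcal H^n|_{L_t\cap\widetilde B})(x)$ at $x\in L_t\cap P_t$ is $O(2^{-N})$ because $x$ lies deep inside the pancake $L_t\cap\widetilde B$, whose radius is $\sim\ell(Q^{(j+N)})\gg \ell(Q^{(j)})$; on the other hand, each pancake $L\cap\widetilde B$ with $L\neq L_t$ lies entirely below $x$, and since its radius dominates the inter-plane distance $\lesssim\ell(Q^{(j)})$, its vertical Riesz contribution is $c_n+O(2^{-N})$, where $c_n>0$ denotes the vertical Riesz flux of a horizontal $n$-plane at unit density. Summing, $\pv\RR\nu(x)=c_n\,e_{n+1}\sum_{L\neq L_t}\rho_L+O(2^{-N})$ on $L_t\cap P_t$, and symmetrically $\pv\RR\nu(x)=-c_n\,e_{n+1}\sum_{L\neq L_b}\rho_L+O(2^{-N})$ on $L_b\cap P_b$. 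Since $\BAUPP(\ve)$ at $Q^{(j)}$ together with the choice of $L_t'$ ensures that $\mu|_{P_t}$ is concentrated in the $\ve\ell(Q^{(j)})$-neighborhood of $L_t$ (well separated from the other planes in $I_{Q^{(j)}}$), averaging over $P_t$ preserves this pointwise value up to an $O(\ve+2^{-N})$ error, and similarly for $P_b$. Therefore
$$|m_{P_t}(\RR\nu)-m_{P_b}(\RR\nu)|\geq c_n\sum_{L\in I_{Q^{(j)}}}\rho_L-O(\ve+2^{-N})\geq \tau_0$$
for some $\tau_0>0$ depending only on the Ahlfors regularity.

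\vv

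The remaining and principal obstacle is to bound $|m_{P_t}(\RR(\chi_{\widetilde B}\mu-\nu))-m_{P_b}(\RR(\chi_{\widetilde B}\mu-\nu))|$. Using $\BAUPP(\ve)$ and Ahlfors regularity, one builds a layer-preserving transport map $T$ between $\chi_{\widetilde B}\mu$ and $\nu$ with $|T(y)-y|\leq C\ve\ell(Q^{(j+N)})$, and expresses the error as $\int [K(x,y)-K(x,T(y))]\,d\mu(y)$. The integrand at $|x-y|\geq \ve\ell(Q^{(j+N)})$ admits a Lipschitz pointwise bound whose leading term cancels after averaging thanks to the matched layerwise masses; inside the region $|x-y|\leq \ve\ell(Q^{(j+N)})$, the dangerous singular contributions are controlled using the assumed $L^2(\mu)$-boundedness of $\RR_\mu$ applied to this perturbation, together with the thin-boundary condition \rf{eqthin} which prevents concentration of $\mu$ near $\partial_lC_Q$. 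Fixing first $N$ large enough that $2^{-N}\ll \tau_0$, and then $\ve$ much smaller than $2^{-N}$, the error becomes smaller than $\tau_0/2$, and we conclude
$$|m_{P_t}(\RR_\mu(\chi_{2R}))-m_{P_b}(\RR_\mu(\chi_{2R}))|\geq \tau_0/2=:\tau.$$
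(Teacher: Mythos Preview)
Your decomposition into far and near parts and the $O(2^{-N})$ bound on the far part are correct and match the paper. The flat-model strategy for the near part, however, contains a fatal error.

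The step ``averaging over $P_t$ preserves this pointwise value up to an $O(\ve+2^{-N})$ error'' is false. The vertical Riesz transform $\RR^{n+1}\nu$ has a jump of size $c_n\rho_{L_t}$ across $L_t$: at $x\in L_t$ the principal-value self-contribution is $0$, but at $x$ just below $L_t$ (which is where $\supp\mu\cap P_t$ sits, since $L_t$ was shifted so that $\supp\mu\cap B_{Q^{(j+N)}}\subset S_Q$) the self-contribution from $\rho_{L_t}\mathcal H^n|_{L_t\cap\widetilde B}$ is $-c_n\rho_{L_t}+O(2^{-N})$. Hence the correct value of $m_{P_t}(\RR^{n+1}\nu)$ is approximately $c_n\big(-\rho_{L_t}+\sum_{L\neq L_t}\rho_L\big)$, and similarly $m_{P_b}(\RR^{n+1}\nu)\approx c_n\big(\rho_{L_b}-\sum_{L\neq L_b}\rho_L\big)$. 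In the two-plane case $I_{Q^{(j)}}=\{L_t,L_b\}$ (which certainly occurs), the difference is $O(2^{-N})$, not $\gtrsim 1$. More conceptually, $\RR^{n+1}$ of a union of infinite parallel $n$-planes is constant on each slab between consecutive planes, so the flat model gives no separation between $P_t$ and $P_b$ beyond the truncation error $O(2^{-N})$.

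This same jump also invalidates the transport bound for $m_{P_t}(\RR(\chi_{\widetilde B}\mu-\nu))$: the self-slab portion of $\chi_{\widetilde B}\mu-\nu$ contributes a term of size $\sim c_n\rho_{L_t}$, not $o(1)$, since the transport distance $|T(y)-y|\lesssim\ve\,\ell(Q^{(j+N)})$ is comparable to $|x-y|$ there and the kernel variation is not small. Your two errors are individually of size $\sim 1$ and in principle could compensate, but extracting the lower bound from that compensation is exactly the content of the lemma; the vague appeal to ``leading terms cancel after averaging'' and $L^2$-boundedness does not accomplish it.

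The paper avoids the flat model entirely and works directly with $\mu$. The lower bound comes from the sign of the vertical component: for $x\in P_t$ one finds a ball $B_x\subset P_b$ with $r(B_x)\approx\dist(x,B_x)\approx\dist(L_t,L_b)$ and $\mu(B_x)\approx r(B_x)^n$, giving $\RR^{n+1}_\mu\chi_{P_b}(x)\gtrsim 1$; the intermediate region $S_i$ contributes with the same sign and can be dropped. The genuine work is then the estimate $|m_{P_t}(\RR^{n+1}_\mu\chi_{2B\cap S_t\setminus P_t})|\lesssim_N\ve^{1/2}$, obtained by direct kernel bounds (using $|x_{n+1}-y_{n+1}|\leq h\approx \ve\,\ell(Q^{(j+N)})$ in $S_t$) together with the thin-boundary condition on $C_Q$.
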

\vv

In the lemma $m_{P}(f)$ stands for the $\mu$-mean of $f$ on $P$.
We defer the proof to the end of this note. 

By standard arguments
 from harmonic analysis\footnote{We provide some detailed arguments in Lemma \ref{lemaux} below.}, denoting by $P_t(Q)$ and $P_b(Q)$ the cylinders $P_t$ and $P_b$ associated with $Q$, we get
\begin{align}\label{eqfuf10}
\sum_{Q\in \DD_\mu(R)\cap \BZ_m^*(\delta)}\mu(Q) & \leq  \frac C{\tau^2}\sum_{Q\in \DD_\mu\cap \BZ_m^*(\delta)}
|m_{P_t(Q)}(\RR_\mu(\chi_{2 R})) - m_{P_b(Q)}(\RR_\mu(\chi_{2 R}))|^2 \,\mu(Q)\\
& \leq C(m,\delta,\tau)\, \|\RR_\mu(\chi_{2 R})\|_{L^2(\mu)}^2\leq C(m,\delta,\tau)\,\mu(R),\nonumber
\end{align}
which proves \rf{eqfinal}.

\vv

\begin{proof}[Proof of Lemma \ref{lem1}]
Denote by $S_Q^t$ the closed region comprised between $L_t$ and $L_t'$ and by $S_Q^b$ the one between $L_b$ and $L_b'$. Also, let
$S_Q^i$ the intermediate open region between $L_t'$ and $L_b'$, so that
$$S_Q = S_Q^t\cup S_Q^i \cup S_Q^b.$$
We put
\begin{align}\label{eqfuf1}
|m_{P_t}(\RR_\mu\chi_{2 R}) - m_{P_b}(\RR_\mu \chi_{2 R})| & \geq 
|m_{P_t}(\RR_\mu\chi_{2B_{Q^{(j+N)}}}) - m_{P_b}(\RR_\mu\chi_{2B_{Q^{(j+N)}}})| \\
&\quad - |m_{P_t}(\RR_\mu\chi_{2 R\setminus 2B_{Q^{(j+N)}}}) - m_{P_b}(\RR_\mu\chi_{2 R\setminus 2B_{Q^{(j+N)}}})|.\nonumber
\end{align}
 Since $P_t$ and $P_b$ are contained in $2B_{Q^{(j)}}$, by standard estimates we have that for all $x\in P_t$ and $y\in P_b$,
$$|\RR_\mu\chi_{2 R\setminus 2B_{Q^{(j+N)}}}(x) - \RR_\mu\chi_{2 R\setminus 2B_{Q^{(j+N)}}}(y)|\lesssim \frac{\ell(Q^{(j)})}{\ell(Q^{(j+N)})}
\approx 2^{-N}.$$
Thus,
\begin{equation}\label{eqfuf1.5}
|m_{P_t}(\RR_\mu\chi_{2 R\setminus 2B_{Q^{(j+N)}}}) - m_{P_b}(\RR_\mu\chi_{2 R\setminus 2B_{Q^{(j+N)}}})|\lesssim 2^{-N}.
\end{equation}

Next we will estimate from below the first term on the right hand side of \rf{eqfuf1}.
To shorten notation, from now on  we will write $B= B_{Q^{(j+N)}}$. We will also denote by $\RR^{n+1}$ be the vertical component of $\RR$. 
By the antisymmetry of $\RR^{n+1}$, we have
\begin{align*}
|m_{P_t}(\RR_\mu\chi_{2B}) - m_{P_b}&(\RR_\mu\chi_{2B})|\\ & \geq m_{P_t}(\RR_\mu^{n+1}\chi_{2B\setminus P_t}) - m_{P_b}(\RR_\mu^{n+1}\chi_{2B\setminus P_b})\\
& = m_{P_t}(\RR_\mu^{n+1}\chi_{2B\cap S_b}) + m_{P_t}(\RR_\mu^{n+1}\chi_{2B\cap S_i}) + m_{P_t}(\RR_\mu^{n+1}\chi_{2B\cap S_t\setminus P_t})\\
&\quad -m_{P_b}(\RR_\mu^{n+1}\chi_{2B\cap S_t})- m_{P_b}(\RR_\mu^{n+1}\chi_{2B\cap S_i}) - m_{P_b}(\RR_\mu^{n+1}\chi_{2B\cap S_b\setminus P_b}).
\end{align*}
Observe that $x_{n+1}-y_{n+1}>0$ for all $x\in P_t$ and $y\in S_i\cup S_b$. Thus
$$ m_{P_t}(\RR_\mu^{n+1}\chi_{2B\cap S_b})\geq m_{P_t}(\RR_\mu^{n+1}\chi_{P_b})\geq 0\quad \text{ and } m_{P_t}(\RR_\mu^{n+1}\chi_{2B\cap S_i})\geq 0.$$
Analogously,
$$m_{P_b}(\RR_\mu^{n+1}\chi_{2B\cap S_b})\leq m_{P_b}(\RR_\mu^{n+1}\chi_{P_t})\leq 0\quad \text{ and } m_{P_b}(\RR_\mu^{n+1}\chi_{2B\cap S_i})\leq 0.$$
Consequently,
\begin{align*}
|m_{P_t}(\RR_\mu\chi_{2B}) - m_{P_b}(\RR_\mu\chi_{2B})| 
& \geq m_{P_t}(\RR_\mu^{n+1}\chi_{P_b}) - m_{P_b}(\RR_\mu^{n+1}\chi_{P_t}) \\
&\quad - |m_{P_t}(\RR_\mu^{n+1}\chi_{2B\cap S_t\setminus P_t})|
-|m_{P_b}(\RR_\mu^{n+1}\chi_{2B\cap S_b\setminus P_b})|.
\end{align*}
From the $\BAUPP(\ve)$ condition and the separation between $P_t$ and $S_b$, it follows that for any $x\in P_t$ there exists a ball
$B_x\subset P_b$ centered in $\supp\mu\cap S_b$ with radius $r_x = \dist(L_t,L_b)/10\geq \delta\ell(Q)/40$ satisfying 
$$\dist(x,B_x)\approx \dist(L_t,L_b)\quad \text{ and }\quad
\mu(B_x\cap S_b)\approx r_x^n.$$
Then we deduce that
$$\RR_\mu^{n+1}\chi_{P_b}(x) \geq \RR_\mu^{n+1}\chi_{B_x}(x)\gtrsim \frac{\mu(B_x)}{\dist(x,B_x)^n}\approx 1.$$
Hence,
$$m_{P_t}(\RR_\mu^{n+1}\chi_{P_b})\gtrsim1,$$
and similarly 
$$-m_{P_b}(\RR_\mu^{n+1}\chi_{P_t})\gtrsim 1.$$
Therefore, there is some constant $\tau_0>0$ depending only on the $Ahlfors$-regularity of $\mu$ such that
\begin{equation}\label{eqfuf2}
|m_{P_t}(\RR_\mu\chi_{2B}) - m_{P_b}(\RR_\mu\chi_{2B})| \geq \tau_0- |m_{P_t}(\RR_\mu^{n+1}\chi_{2B\cap S_t\setminus P_t})|
-|m_{P_b}(\RR_\mu^{n+1}\chi_{2B\cap S_b\setminus P_b})|.
\end{equation}

Our next objective consists in showing that 
\begin{equation}\label{eqfuf4}
|m_{P_t}(\RR_\mu^{n+1}\chi_{2B\cap S_t\setminus P_t})| + |m_{P_b}(\RR_\mu^{n+1}\chi_{2B\cap S_b\setminus P_b})| \lesssim_N \ve^{1/2}.
\end{equation}
Observe that this estimate, together with \rf{eqfuf1}, \rf{eqfuf1.5}, and \rf{eqfuf2}, gives
$$|m_{P_t}(\RR_\mu\chi_{2 R}) - m_{P_b}(\RR_\mu \chi_{2 R})|  \geq \tau_0 - C2^{-N} - C(N)\ve^{1/2},$$
which proves the lemma if $N$ is large enough and $\ve$ small enough.
We will only estimate $|m_{P_t}(\RR_\mu^{n+1}\chi_{2B\cap S_t\setminus P_t})|$, since the arguments for $|m_{P_b}(\RR_\mu^{n+1}\chi_{2B\cap S_b\setminus P_b})|$ are similar. To this end, denote by $h$ the height of $P_t$, so that $h\approx \ve \ell(Q^{(j+N)})\approx 2^N\ve r$ (below we will take $\ve$ such that $2^N\ve\ll1$ and so the reader should thing that $P_t$ and $P_b$ are very flat cylinders).
Consider $x\in P_t$ and let $d(x):=\dist(x,\partial_lC_Q)$. Since the height of the region $S_b$ is $h$, we have $|x_{n+1}-y_{n+1}|\leq h$ for all $x,y\in S_b$, and so
$$|\RR_\mu^{n+1}\chi_{2B\cap S_t\setminus P_t})(x)|\leq \int_{y\in S_t\setminus P_t} \frac{\min(h,|x-y|)}{|x-y|^{n+1}}\,d\mu(y)
\leq \int_{|x-y|\geq d(x)} \frac{\min(h,|x-y|)}{|x-y|^{n+1}}\,d\mu(y)
.$$
It $d(x)> h$, then $|x-y|\geq h$ for $y$ in the domain of integration of the last integral. In this case, we write
$$|\RR_\mu^{n+1}\chi_{2B\cap S_t\setminus P_t})(x)|\leq \int_{|x-y|\geq d(x)} \frac{h}{|x-y|^{n+1}}\,d\mu(y)\lesssim \frac h{d(x)},$$
using the polynomial growth of $\mu$ for the last estimate.
In the case $d(x)\leq h$, we split
\begin{align*}
\int_{|x-y|\geq d(x)} \!\frac{\min(h,|x-y|)}{|x-y|^{n+1}}\,d\mu(y) &\leq \int_{d(x)\leq |x-y|\leq h}  \frac1{|x-y|^{n}}\,d\mu(y) +
 \int_{|x-y|> h}  \frac h{|x-y|^{n+1}}\,d\mu(y)\\
 & \lesssim \log\frac{2h}{d(x)} + 1\lesssim \bigg(\frac{h}{d(x)}\bigg)^{1/2},
\end{align*}
using again the polynomial growth of $\mu$.

Putting altogether, we get
\begin{align*}
\int_{P_t}|\RR_\mu^{n+1}\chi_{2B\cap S_t\setminus P_t})(x)|\,d\mu(x) & \leq \int_{x\in P_t:d(x)\leq h} \bigg(\frac{h}{d(x)}\bigg)^{1/2}\,d\mu(x) +
\int_{x\in P_t:d(x)> h}\frac h{d(x)}\,d\mu(x) \\ &= I_1 + I_2.
\end{align*}
We will estimate both $I_1$ and $I_2$ using the thin boundary condition \rf{eqthin}.
First we deal with $I_1$:
\begin{align*}
I_1 & = \sum_{k\geq0} \int_{x\in P_t:2^{-k-1}<d(x)\leq 2^{-k}h} \bigg(\frac{h}{d(x)}\bigg)^{1/2}\,d\mu(x)\\
&\lesssim \sum_{k\geq0} 2^{k/2}\,\mu(\{x\in P_t:d(x)\leq 2^{-k}h\} \lesssim \sum_{k\geq0} 2^{k/2}\,\frac{2^{-k} h}{r}\,\mu(P_t)
\lesssim \frac{ h}{r}\,\mu(P_t).
\end{align*}
Also,
\begin{align*}
I_2 & =  \sum_{k\geq0}
\int_{x\in P_t:2^kh< d(x)\leq 2^{k+1}h} \frac h{d(x)}\,d\mu(x) \lesssim \sum_{k\geq0}2^{-k}\,
\mu(\{x\in P_t:d(x)\leq 2^{k+1}h \}) \\
& \lesssim\sum_{\substack{k\geq0:\\2^kh\leq r}}
2^{-k}\,\frac{2^{k+1}h}r\,\mu(P_t) \approx \frac hr\,\log\frac rh\,\mu(P_t)\lesssim \bigg(\frac{h}r\bigg)^{1/2}\,\mu(P_t).
\end{align*}
Therefore,
$$\int_{P_t}|\RR_\mu^{n+1}\chi_{2B\cap S_t\setminus P_t})(x)|\,d\mu(x)\lesssim \bigg(\frac{h}r\bigg)^{1/2}\,\mu(P_t)\approx 2^{N/2}\ve^{1/2}\,\mu(P_t),$$
which, together with the analogous estimate for $P_b$,
 proves \rf{eqfuf4} and concludes the proof of the lemma.
\end{proof}

\vv
In the lemma below, we provide the detailed arguments for the estimate \rf{eqfuf10}.

\begin{lemma}\label{lemaux}
Under the above assumptions, we have
\begin{equation}\label{eqfuf7}
\sum_{Q\in \DD_\mu(R)\cap \BZ_m^*(\delta)}
|m_{P_t(Q)}(\RR_\mu(\chi_{2 R})) - m_{P_b(Q)}(\RR_\mu(\chi_{2 R}))|^2 \,\mu(Q)
 \leq C(m,\delta,\ve,\tau)\, \|\RR_\mu(\chi_{2 R})\|_{L^2(\mu)}^2.
\end{equation}
\end{lemma}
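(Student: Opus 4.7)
The plan is to recognize \rf{eqfuf7} as a Bessel-type inequality for a system of mean-zero bumps. Set
$$\phi_Q \;=\; \frac{\chi_{P_t(Q)}}{\mu(P_t(Q))}\;-\;\frac{\chi_{P_b(Q)}}{\mu(P_b(Q))},$$
so that $\langle g,\phi_Q\rangle_\mu = m_{P_t(Q)}(g)-m_{P_b(Q)}(g)$. Define $T:L^2(\mu)\to\ell^2$ by $(Tg)_Q=\mu(Q)^{1/2}\langle g,\phi_Q\rangle$ with $Q$ ranging over $\DD_\mu(R)\cap\BZ_m^*(\delta)$. If I prove $\|T\|\lesssim_{m,\delta,\ve}1$, then applying $T$ to $g=\RR_\mu(\chi_{2R})$ yields \rf{eqfuf7}. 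By $\|T\|^2=\|TT^*\|$ and Schur's test, the desired bound reduces to
$$\sup_{Q}\sum_{Q'}\mu(Q)^{1/2}\mu(Q')^{1/2}\,|\langle\phi_Q,\phi_{Q'}\rangle_\mu|\;\lesssim_{m,\delta,\ve}\;1.$$
I will use the properties $\int\phi_Q\,d\mu=0$, $\|\phi_Q\|_{L^\infty(\mu)}\lesssim\mu(Q)^{-1}$, $\|\phi_Q\|_{L^2(\mu)}^2\lesssim\mu(Q)^{-1}$, and $\supp\phi_Q\subset 2B_{Q^{(m)}}$, all of which follow from $n$-Ahlfors regularity and the $\BAUPP(\ve)$ lower bound $\mu(P_t(Q))\approx\mu(P_b(Q))\approx\ell(Q^{(j(Q))})^n\geq \mu(Q)$.

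The Schur sum is handled by splitting according to the relative scales of $Q$ and $Q'$. For comparable scales $|\log_2(\ell(Q)/\ell(Q'))|\lesssim m$, Cauchy--Schwarz gives $|\langle\phi_Q,\phi_{Q'}\rangle|\leq\|\phi_Q\|_2\|\phi_{Q'}\|_2\lesssim(\mu(Q)\mu(Q'))^{-1/2}$, so each term contributes $\lesssim 1$, and the bounded overlap of $\{2B_{Q^{(m)}}\}$ at each fixed scale gives a total $\lesssim C(m)$. For well-separated scales, say $\ell(Q)\ll\ell(Q')$ (the reverse case being symmetric), the function $\phi_{Q'}$ takes only the three values $\mu(P_t(Q'))^{-1}$, $-\mu(P_b(Q'))^{-1}$, and $0$ on the partition $P_t(Q'),P_b(Q'),\R^{n+1}\setminus(P_t(Q')\cup P_b(Q'))$; the mean-zero property of $\phi_Q$ then forces $\langle\phi_Q,\phi_{Q'}\rangle=0$ unless $\supp\phi_Q$ straddles $\partial P_t(Q')$ or $\partial P_b(Q')$. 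The straddling contribution is controlled by the thin-boundary condition \rf{eqthin} applied to the lateral boundary of $C_{Q'}$, together with estimates on the $\mu$-measure of thin neighborhoods of the horizontal base planes $L_t,L_t',L_b,L_b'$ associated with $Q'$: the $\BAUPP(\ve)$ approximation handles $L_t,L_b$ directly, while $L_t',L_b'$ are by construction separated by $\gtrsim\ve\ell({Q'}^{(j+N)})$ from all other planes of $I_{{Q'}^{(j)}}$, so $\mu$ concentrates near the other parallel planes and therefore avoids thin neighborhoods of $L_t',L_b'$. This yields a decay factor in $\ell(Q)/\ell(Q')$ and the sum over $Q'$ converges geometrically.

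The main obstacle will be the cross-scale estimate: turning the geometric smallness of the straddling region into a quantitative bound on $|\langle\phi_Q,\phi_{Q'}\rangle|$ that decays sufficiently in the scale gap $\log(\ell(Q')/\ell(Q))$ to give a convergent Schur sum. While \rf{eqthin} directly controls the lateral part, the contribution from the horizontal base faces requires carefully combining the $\BAUPP(\ve)$ approximation with the explicit geometric separation of the auxiliary planes $L_t',L_b'$ from the concentration planes of $\mu$. Once this cross-scale bound is secured, the Schur sum is dominated by a constant depending only on $m,\delta,\ve$, and \rf{eqfuf7} follows.
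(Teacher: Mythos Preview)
Your $TT^*$/Schur approach is different from the paper's and, as written, has a genuine gap in the cross-scale estimate. The paper does \emph{not} work directly with the discontinuous functions $\phi_Q$: it replaces each $\phi_Q$ by a Lipschitz mollification $\psi_Q$, invokes a standard Bessel inequality for Lipschitz, mean-zero, compactly supported bumps (from \cite[\S14]{NToV}), and then controls the error $|\langle f,\eta_Q-\psi_Q\rangle|$ using the BMO bound for $f=\RR_\mu\chi_{2R}$ together with the thin-boundary conditions. Crucially, to absorb this error the paper uses the \emph{lower bound} from Lemma~\ref{lem1}, which is why the constant in \rf{eqfuf7} depends on $\tau$.

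The difficulty with your direct Schur test is that the almost-orthogonality of the non-smooth $\phi_Q$ is too weak in codimension one for $n\geq 2$. Fix a large cube $Q$ and sum over small cubes $Q'$ with $\ell(Q')\ll\ell(Q)$. By your own analysis, $\langle\phi_Q,\phi_{Q'}\rangle\neq 0$ forces $\supp\phi_{Q'}$ to straddle $\partial P_t(Q)\cup\partial P_b(Q)$. For the lateral part, the thin-boundary condition \rf{eqthin} gives that the total $\mu$-measure of the straddling $Q'$ at scale $\ell(Q')$ is at most $C\,(\ell(Q')/r(Q))\,r(Q)^n$, i.e.\ roughly $(\ell(Q)/\ell(Q'))^{n-1}$ cubes. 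On the other hand, the only available bound per term is $\mu(Q)^{1/2}\mu(Q')^{1/2}|\langle\phi_Q,\phi_{Q'}\rangle|\lesssim (\ell(Q')/\ell(Q))^{n/2}$ (via $\|\phi_{Q'}\|_{L^1}\|\phi_Q\|_{L^\infty}$), because the jump of $\phi_Q$ across $\partial_l C_Q$ leaves no extra cancellation to exploit. The per-scale contribution is therefore $\lesssim (\ell(Q')/\ell(Q))^{1-n/2}$, which diverges for $n\geq 2$. Your proposal only argues convergence of the sum over the \emph{larger} cube (where indeed there are $O_m(1)$ candidates per scale), but Schur requires the full row sum, including the smaller cubes; the two directions are not ``symmetric'' in the sense you need.

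The paper's mollification is precisely what buys the missing decay: for Lipschitz $\psi_Q$ one gets an additional factor $\ell(Q')/\ell(Q)$ from the regularity, which makes the Bessel inequality \rf{eqfuf8} go through. The price is the mollification error, and that is where Lemma~\ref{lem1} and the BMO estimate enter.
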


\begin{proof}
Our arguments below will use Lemma \ref{lem1} and the following standard result, whose proof can be found in  Section 14 from \cite{NToV}:

\vv
{\em Let $\mu$ be $n$-Ahlfors regular and let $\DD_\mu$ be the David-Semmes lattice associated with $\mu$. Let $\{\psi_Q\}_{Q\in\DD_\mu}$ be a family of Lipschitz functions satisfying the following, for fixed constants $C_2,C_3>1$:
\begin{itemize}
\item[(1)] $\supp\psi_Q\subset C_2B_Q$,
\item[(2)] $\int\psi_Q\,d\mu=0,$
\item[(3)] ${\rm Lip}(\psi_Q)\leq C_3\,\ell(Q)^{-\frac n2 -1}$.
\end{itemize}
Then, for any function $f\in L^2(\mu)$,
\begin{equation}\label{eqfuf8}
\sum_{Q\in\DD_\mu} \langle f,\psi_Q\rangle_\mu^2\lesssim \|f\|_{L^2(\mu)}^2,
\end{equation}
with the implicit constant depending only on $C_2$, $C_3$, and the $n$-Ahlfors regularity of $\mu$.
}
\vv

One could also prove this lemma  without using neither this result nor the estimate in Lemma \rf{lem1}, but our approach here suffices for our purposes.

Denote $f= \RR_\mu\chi_{2R}$. The sum on the left hand side of \rf{eqfuf7} can be written as follows
\begin{align*}
\sum_{Q\in \DD_\mu(R)\cap \BZ_m^*(\delta)} \left(\int \left(\frac{\chi_{P_t(Q)}}{\mu(P_t(Q))} - \frac{\chi_{P_b(Q)}}{\mu(P_b(Q))} \right)\,\mu(Q)^{1/2}f\,d\mu\right)^2 =\sum_{Q\in\DD_\mu(R)\cap \BZ_m^*(\delta)} \big\langle f,\eta_Q\big\rangle_\mu^2,
\end{align*}
where 
$$\eta_Q = \left(\frac{\chi_{P_t(Q)}}{\mu(P_t(Q))} - \frac{\chi_{P_b(Q)}}{\mu(P_b(Q))} \right)\,\mu(Q)^{1/2}.$$
We cannot apply \rf{eqfuf8} with $\eta_Q$ in place of $\psi_Q$ because $\eta_Q$ is not Lipschitz. To solve this issue, we will approximate
$\eta_Q$ by a a suitable Lipschitz function.

For each $Q\in\DD_\mu\cap \BZ_m^*(\delta)$, let $\wt\chi_{P_b(Q)}$ be a Lipschitz function which equals $1$ on $P_b(Q)$ and is supported in the $(t\ell(Q))$-neighborhood $U_{t\ell(Q)}(P_b)$, with ${\rm Lip}(\wt\chi_{P_b(Q)})\lesssim(t\,\ell(Q))^{-1}$, 
for a small $t$ to be chosen below, possibly depending on $\ve$ and $\delta$. We define $\wt \chi_{P_b}$ analogously, and then we let
$$\psi_Q = \left(\frac{\wt\chi_{P_t(Q)}}{\int\wt\chi_{P_t(Q)}\,d\mu}  - \frac{\wt\chi_{P_b(Q)}}{\int\wt\chi_{P_b(Q)}\,d\mu} \right)\,\mu(Q)^{1/2}.$$
It is immediate to check that the functions $\psi_Q$ satisfy the assumptions (1), (2), (3) above, possibly with constants depinding on $N$, $t$,
and other parameters.

Now, by the zero-mean value of $\eta_Q$ and $\psi_Q$,
\begin{align}\label{eqfuf9}
|\langle \eta_Q, f\rangle_\mu| & \leq |\langle \psi_Q, f\rangle_\mu| + |\langle (\eta_Q -\psi_Q), (f-m_Q(f))\rangle_\mu|\\
& \leq |\langle \psi_Q, f\rangle_\mu| + \|\eta_Q -\psi_Q\|_{L^2(\mu)} \left(\int_{2Q^{(j+N)}}|f-m_Q(f)|^2\,d\mu\right)^{1/2}\nonumber\\
&\leq |\langle \psi_Q, f\rangle_\mu| + C \|\eta_Q -\psi_Q\|_{L^2(\mu)} \mu(Q)^{1/2},\nonumber
\end{align}
where we used the boundedness of $\RR_\mu$ from $L^\infty(\mu)$ to $BMO(\mu)$ in the last inequality, and recalling that $f=\RR_\mu\chi_{2R}$.

Observe that $\wt\chi_{P_t(Q)}- \chi_{P_t(Q)}=\chi_{U_t(P_t(Q))\setminus P_t(Q)}$ and so
$$\|\wt\chi_{P_t(Q)}- \chi_{P_t(Q)}\|_{L^1(\mu)} + \|\wt\chi_{P_t(Q)}- \chi_{P_t(Q)}\|_{L^2(\mu)}^2 \lesssim \mu(U_t(P_t(Q))\setminus P_t(Q)) \lesssim_{N,\ve} t\,\mu(Q),$$
by the small boundary condition of $C_Q$ and the choice of the $n$-planes $L_t$, $L_t'$. The same estimate holds with $P_b$ instead of $P_t$.
Then it follows that
\begin{align*}
\|\eta_Q -\psi_Q\|_{L^2(\mu)}& \lesssim \frac{\|\wt\chi_{P_t(Q)}- \chi_{P_t(Q)}\|_{L^1(\mu)}}{\mu(Q)^2}\,\mu(P_t)^{1/2}\mu(Q)^{1/2}
+ \frac{\|\wt\chi_{P_t(Q)}- \chi_{P_t(Q)}\|_{L^2(\mu)}}{\mu(P_t)}\,\mu(Q)^{1/2}\\
&\quad\!\!\!\!\! +\frac{\|\wt\chi_{P_b(Q)}- \chi_{P_b(Q)}\|_{L^1(\mu)}}{\mu(Q)^2}\,\mu(P_b)^{1/2}\mu(Q)^{1/2}
+ \frac{\|\wt\chi_{P_b(Q)}- \chi_{P_b(Q)}\|_{L^2(\mu)}}{\mu(P_b)}\,\mu(Q)^{1/2}\\
&\lesssim_{N,\ve} \frac{t\,\mu(Q)}{\mu(Q)^2}\,\mu(Q) + \frac{t^{1/2}\,\mu(Q)^{1/2}}{\mu(Q)}\,\mu(Q)^{1/2}\lesssim_{N,\ve} t^{1/2},
\end{align*}
since $t<1$.
Plugging this estimate into \rf{eqfuf9}, we derive
$$|\langle f, \eta_Q\rangle_\mu| \leq |\langle f,\psi_Q\rangle_\mu| + C(\ve,N)\, t^{1/2}\mu(Q)^{1/2}.$$
By Lemma \ref{lem1}, we know that $|\langle f,\eta_Q\rangle_\mu|\geq \tau\,\mu(Q)^{1/2},$ where $\tau$ is an absolute constant. Therefore,
it we take $t$ small enough so that $C(\ve,N)\, t^{1/2}\leq \tau/2$, the last term on the right hand side in the display above can be absorbed by the term on the left hand side, giving
$$|\langle f,\eta_Q\rangle_\mu| \leq 2|\langle f,\psi_Q\rangle_\mu|.$$

Squaring, summing over $Q\in\DD_\mu(R)\cap\BZ^*_m(\delta)$, applying \rf{eqfuf8}, and using the $L^2(\mu)$ boundedness of the Riesz transform, we obtain
$$\sum_{Q\in \DD_\mu(R)\cap\BZ_m^*(\delta)} |\langle f,\eta_Q\rangle_\mu|^2 \leq 4\sum_{Q\in \DD_\mu(R)\cap\BZ_m^*(\delta)} |\langle f,\psi_Q\rangle_\mu|^2\lesssim
\|\RR_\mu\chi_{2R}\|_{L^2(\mu)}^2 \lesssim \mu(R),$$
concluding the proof of the lemma.
\end{proof}



\vv

\end{document}